\newtheorem{theorem}{Theorem}[section]
\newtheorem{lemma}[theorem]{Lemma}
\theoremstyle{definition}
\newtheorem{definition}[theorem]{Definition}
\newtheorem{notation}[theorem]{Notation}
\theoremstyle{remark}
\newtheorem{remark}[theorem]{Remark}
\numberwithin{equation}{section}
\newcommand{\Real}{{\mathbb R}}
\newcommand{\eps}{\varepsilon}
\newcommand{\f}{\mathbf{f}}
\newcommand{\x}{\mathbf{x}}
\newcommand {\HH} {{\rm H}}
\newcommand {\card} {{\rm card}}
\newcommand {\hide}[1]{}
\begin{document}
\title[A Helly-type theorem for semi-monotone sets and monotone maps]
{A Helly-type theorem  for semi-monotone sets and monotone maps}
\author{Saugata Basu}
\address{Department of Mathematics,
Purdue University, West Lafayette, IN 47907, USA}
\email{sbasu@math.purdue.edu}
\author{Andrei Gabrielov}
\address{Department of Mathematics,
Purdue University, West Lafayette, IN 47907, USA}
\email{agabriel@math.purdue.edu}
\author{Nicolai Vorobjov}
\address{
Department of Computer Science, University of Bath, Bath
BA2 7AY, England, UK}
\email{nnv@cs.bath.ac.uk}
\thanks{The first author was supported in part by NSF grant CCF-0915954.
The second author was supported in part by NSF grants DMS-0801050 and DMS-1067886}

\begin{abstract}
\hide{
Semi-monotone sets and monotone maps are special classes
of definable sets and maps respectively in o-minimal structures over $\Real$.
They were defined to serve as building blocks for
obtaining cylindrical cell decomposition of given definable sets
without changing the coordinate system. By definition, semi-monotone
sets and graphs of monotone maps, share certain properties of
those convex subsets of $\Real^n$ which are sub-manifolds of linear manifolds.
However, semi-monotone
sets and graphs of monotone maps are in general very  far from being convex,
and intersections amongst such sets, unlike convex sets, can be topologically
complicated. Despite this lack of good intersectional properties, we prove
a Helly-type theorem for these special classes of definable sets.}

We consider sets and maps defined over an o-minimal structure over the reals,
such as real semi-algebraic or 
globally
subanalytic sets.
A {\em monotone map} is a multi-dimensional generalization of a usual univariate monotone 
continuous function on an open interval,
while the closure of the graph of a monotone map is a generalization of a compact convex set.
In a particular case of an identically constant function, such a 
graph is called a {\em semi-monotone set}.
Graphs of monotone maps are, generally, non-convex,
and their intersections, unlike intersections of convex sets, can be topologically complicated.
In particular, such 
an intersection  is not necessarily the graph of a monotone map.
Nevertheless, we prove a Helly-type theorem, which says that for a finite family of subsets of
$\Real^n$, if all intersections of
subfamilies, with cardinalities at most $n+1$, are non-empty and graphs of monotone maps, then
the intersection of the whole family is non-empty and the graph of a monotone map.
\end{abstract}
\maketitle

\section{Introduction}
In \cite{BGV,BGV2012} the authors introduced a certain
class of definable subsets of $\Real^n$ (called \emph{semi-monotone sets})
and definable maps $f: \Real^n \rightarrow \Real^k$ (called
\emph{monotone maps}) in an o-minimal structure over $\Real$.
These objects are meant to serve as building blocks for obtaining 
a conjectured cylindrical cell decomposition 
of definable sets into topologically regular cells,
without changing the coordinate system in the ambient space $\Real^n$
(see \cite{BGV,BGV2012} for a more detailed motivation behind these 
definitions).

The semi-monotone sets, and more generally the graphs of monotone maps,
have certain properties which resemble those of classical convex subsets
of $\Real^n$.
Indeed, the intersection
of any definable open convex subset of $\Real^n$ with an affine
flat (possibly $\Real^n$ itself) is the graph of a monotone map.
In this paper, we prove a version of the classical theorem of Helly on
intersections of convex subsets of $\Real^n$.

We first fix some notation that we are going to use
for the rest of the paper.

\begin{notation}
For every positive integer $p$, we will denote by $[p]$ the set $\{1,\ldots,p\}$.
We fix an integer $s > 0$, and we will henceforth denote by $I$ the set $[s]$.
For any family, $\mathcal{F} = (\mathbf{F}_i)_{i \in I}$, of subsets of $\Real^n$
and $J \subset I$, we will denote by $\mathcal{F}_J$ the set
$\displaystyle{ \bigcap_{j \in J} \mathbf{F}_j}$.
\end{notation}

\begin{theorem}[Helly's Theorem \cite{Helly,Radon}]\label{the:classical_helly}
Let $\mathcal{F} = (\mathbf{F}_i)_{i \in I}$ be a family of convex subsets of $\Real^n$, such
that for each subset $J \subset I$ such that $\card\; J \leq n+1$, the intersection $\mathcal{F}_J$
is non-empty.
Then, $\mathcal{F}_I$ is non-empty.
\end{theorem}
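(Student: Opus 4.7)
The plan is to argue by induction on $s = \card\; I$, with Radon's partition theorem supplying the crucial geometric step. The base case $s \leq n+1$ is handled immediately: the hypothesis applied to $J = I$ itself gives $\mathcal{F}_I \neq \emptyset$.

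For the inductive step, I would assume $s \geq n+2$ and that the theorem holds for all families of size $s-1$. The first task is to build a candidate for a common point. For each $i \in I$, the subfamily $(\mathbf{F}_j)_{j \in I \setminus \{i\}}$ has size $s-1$ and inherits the $(n+1)$-wise intersection hypothesis, since every $J \subset I \setminus \{i\}$ is also a subset of $I$. The inductive hypothesis then yields a point $\mathbf{x}_i \in \mathcal{F}_{I \setminus \{i\}}$, producing $s \geq n+2$ points $\mathbf{x}_1, \ldots, \mathbf{x}_s$ in $\Real^n$.

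Next I would merge these candidates into a single element of $\mathcal{F}_I$ using Radon's theorem: since $s \geq n+2$, there exists a partition $I = A \sqcup B$ and a point $\mathbf{y}$ lying simultaneously in the convex hull of $\{\mathbf{x}_i\}_{i \in A}$ and the convex hull of $\{\mathbf{x}_j\}_{j \in B}$. To verify $\mathbf{y} \in \mathcal{F}_I$, fix any $k \in I$; it belongs to exactly one side of the partition, say $k \in A$. Then for every $j \in B$ we have $j \neq k$, hence $\mathbf{x}_j \in \mathcal{F}_{I \setminus \{j\}} \subseteq \mathbf{F}_k$. Convexity of $\mathbf{F}_k$ now forces $\mathbf{y} \in \mathbf{F}_k$ (as $\mathbf{y}$ is in the convex hull of the $\mathbf{x}_j$ for $j \in B$). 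Since $k$ was arbitrary, $\mathbf{y}$ lies in every $\mathbf{F}_i$.

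The only non-trivial input is Radon's theorem itself, which follows from the observation that any $n+2$ points in $\Real^n$ are affinely dependent. I do not expect any real obstacle here: the statement is classical, cited to its original sources by the authors, and the argument above is the standard textbook reduction via Radon's lemma.
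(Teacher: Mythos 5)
Your proof is correct: it is the standard Radon-partition argument for Helly's theorem, with the base case handled by the hypothesis applied to $J=I$ and the inductive step assembling the candidate point from Radon's lemma applied to the $s\geq n+2$ points $\mathbf{x}_i\in\mathcal{F}_{I\setminus\{i\}}$. The paper does not supply its own proof of this classical result — it cites Helly and Radon — and the argument you give is precisely the one attributed to Radon, so there is nothing to compare beyond noting that you have correctly reconstructed the cited proof.
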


In this paper we prove an analogue of Helly's theorem for semi-monotone sets as well as for
graphs of monotone maps.
One important result in 
\cite[Theorem 13]{BGV2012} 
is that the graph
of a monotone map is a topologically regular cell.
However, unlike the case of a family of convex sets, 
the intersection of a finite
family of graphs of monotone maps need not be a graph of a monotone map, or even be connected.
Moreover, such an intersection can have an arbitrarily large number of connected components.
Because of this lack of a good intersectional property, one would not normally expect
a Helly-type theorem to hold in this case.
Nevertheless, we are able to prove the following theorem.

\begin{theorem}
\label{the:monotone_helly}
Let $\mathcal{F} = (\mathbf{F}_i)_{i \in I}$
be a family of definable subsets of $\Real^n$ such
that for each $i \in I$ the set $\mathbf{F}_i$  is the graph of a monotone map, and
for each $J \subset I$, with $\card\; J \leq n+1$, the intersection $\mathcal{F}_J$
is non-empty and the graph of a monotone map.
Then, $\mathcal{F}_I$ is non-empty and the graph of a monotone map as well.

Moreover, if $\dim \; \mathcal{F}_J \geq d$
for each $J \subset I$, with $\card\; J \leq n+1$, then $\dim \; \mathcal{F}_I \geq d$.
\end{theorem}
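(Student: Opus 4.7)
The plan is to argue by induction on the cardinality $s$ of the index set $I$. For $s \le n+1$ the conclusion is part of the hypothesis, so assume $s \ge n+2$ and that the theorem is known for all smaller values of $s$. Because the hypothesis on $(n+1)$-wise sub-intersections restricts verbatim to any subfamily of size $s-1$, the inductive hypothesis yields that $\mathcal{F}_{I\setminus\{i\}}$ is non-empty and is the graph of a monotone map for every $i \in I$. All remaining work then concerns the passage from these $s$ sets of size $s-1$ to the full intersection $\mathcal{F}_I = \bigcap_{i \in I} \mathcal{F}_{I\setminus\{i\}}$.

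For non-emptiness my plan is to invoke a topological Helly-type theorem. By Theorem~5.1 of \cite{BGV2012}, every graph of a monotone map is a topologically regular cell, and in particular is contractible. By assumption each $\mathcal{F}_J$ with $|J|\le n+1$ is either empty or a graph of a monotone map, so every non-empty sub-intersection of size at most $n+1$ is contractible. The topological version of Helly's theorem (or, equivalently, an application of the nerve theorem to the resulting good cover) then delivers $\mathcal{F}_I \neq \emptyset$.

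The principal difficulty, and the step I expect to be the main obstacle, is to upgrade non-emptiness to the assertion that $\mathcal{F}_I$ is itself the graph of a monotone map, because this class is not closed under arbitrary intersection. My plan is to verify the defining conditions of a monotone map from \cite{BGV,BGV2012} pointwise on $\mathcal{F}_I$. The monotonicity conditions at a point $x$ constrain only finitely many coordinate directions and are each testable on the section of $\mathcal{F}_I$ with a two-dimensional coordinate plane through $x$; the goal is to show that any failure of such a condition is always witnessed by at most $n+1$ of the $\mathbf{F}_i$, so that it would contradict the hypothesis that the corresponding $\le (n+1)$-wise intersection is the graph of a monotone map. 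Pinpointing those $\le n+1$ sets that carry any putative obstruction, and ruling out the obstruction from the monotone structure they must have, is the technical heart of the argument.

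For the dimensional refinement, the plan is to apply the non-empty part of the theorem to the restriction of $\mathcal{F}$ to a generic coordinate flat $L$ of codimension $d$: if every $\mathcal{F}_J$ with $|J|\le n+1$ has dimension at least $d$, then each such $\mathcal{F}_J$ meets each such $L$, and the restricted family $(\mathbf{F}_i\cap L)_{i\in I}$ inherits the intersection hypotheses in ambient dimension $n-d$. The non-empty part of the theorem applied in $L$ then gives that $\mathcal{F}_I$ meets every generic flat of codimension $d$, which forces $\dim\mathcal{F}_I\ge d$.
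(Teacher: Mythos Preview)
Your outline shares the paper's opening move---induction on $s$ and a topological Helly/nerve argument for non-emptiness---but it has two genuine gaps.

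First, a technical one: the nerve theorem needs a good cover, and graphs of monotone maps are in general neither open nor closed in $\Real^n$, so you cannot invoke it directly. The paper handles this by first replacing each $\mathbf{F}_i$ with a compact set $\mathbf{F}_i'$ having the same singular homology on every sub-intersection (their Lemma preceding the proof), and only then runs the nerve argument on the compact family. Your sketch skips this step.

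Second, and more seriously, your plan for showing that $\mathcal{F}_I$ is itself the graph of a monotone map does not go through as stated. The operative characterization (Theorem~4.3 of \cite{BGV2012}, quoted in the paper) has two ingredients: the map must be \emph{quasi-affine}, and every intersection with an affine coordinate subspace must be \emph{connected}. Neither of these is a pointwise condition that can be tested on a single two-dimensional coordinate slice, and there is no mechanism by which a failure of either condition is automatically ``witnessed by at most $n+1$ of the $\mathbf{F}_i$''. Connectedness of $\mathcal{F}_I$ (and of its coordinate slices) is a global statement about the whole family; quasi-affineness is a statement about projections of the entire set $\mathcal{F}_I$. The paper obtains both by running a \emph{second} induction, on the ambient dimension $n$: connectedness comes from slicing with a hyperplane $\{x_1=c\}$ and applying the theorem in $\Real^{n-1}$, while quasi-affineness comes from first locating (via a separate lemma) a sub-intersection $\mathcal{F}_J$ with $\card\,J\le n-p$ already of the correct dimension $p$, then projecting the whole family to a coordinate subspace $T$ of dimension $m<n$ on which one $\mathbf{F}_i$ is a graph, and applying the inductive hypothesis in $T$. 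Your single induction on $s$ gives no leverage for either step.

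Your dimensional refinement via generic codimension-$d$ coordinate flats is a different route from the paper's (which again projects to a lower-dimensional coordinate subspace and inducts on $n$); it is plausible but you would need to argue carefully that a graph of a monotone map of dimension $\ge d$ meets a generic coordinate flat of codimension $d$, and that the restricted family still satisfies the $(n-d+1)$-wise hypothesis.
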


\begin{remark}
Katchalski \cite{Katchalski71}
(see also \cite{Grunbaum}) proved the following generalization of
Helly's theorem which took into account dimensions of the various
intersections.

\begin{theorem}[\cite{Katchalski71,Grunbaum}]
\label{the:grunbaum}
Define the function $g(j)$ as follows:
\begin{itemize}
\item[ ]
$g(0) = n+1$,
\item[ ]
$g(j) = \max(n+1, 2(n-j+1))$ for $1 \leq j \leq  n$.
\end{itemize}
Fix any $j$ such that $0 \leq j \leq n$.
Let $\mathcal{F} = (\mathbf{F}_i)_{i \in I}$
be a family of convex subsets of $\Real^n$, with $\card\; I \geq g(j)$,
such that for each $J \subset I$, with $\card\; J \leq g(j)$, the dimension
$\dim \; \mathcal{F}_J \geq j$.
Then, the dimension $\dim \; \mathcal{F}_I \geq j$.
\end{theorem}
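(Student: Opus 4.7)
The plan is to prove Theorem \ref{the:grunbaum} by reducing, in each case, to the classical Helly theorem (Theorem \ref{the:classical_helly}), handling the intermediate range via a transverse-cone construction at a relative interior point of the intersection. The two extreme cases are immediate: for $j=0$ the statement is literally Theorem \ref{the:classical_helly} since $g(0)=n+1$; for $j=n$ the hypothesis that every $(n+1)$-wise intersection has dimension $n$ means each such intersection has non-empty interior, so Theorem \ref{the:classical_helly} applied to the open convex family $\{\mathrm{int}(\mathbf{F}_i)\}_{i\in I}$ gives $\mathrm{int}(\mathcal{F}_I)\neq\emptyset$ and hence $\dim\mathcal{F}_I=n$.

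For the intermediate range $0<j<n$, I would argue by contradiction. Suppose $\dim\mathcal{F}_I=k<j$, let $L=\mathrm{aff}\,\mathcal{F}_I$ (a $k$-flat), pick $p$ in the relative interior of $\mathcal{F}_I$, and let $\pi\colon\Real^n\to V$ be the projection to a complementary $(n-k)$-dimensional subspace $V$. The strategy is to associate to each $\mathbf{F}_i$ a closed convex cone $C_i\subset V$ recording the ``outward'' behavior of $\mathbf{F}_i$ transverse to $L$ at $p$, in such a way that the overall intersection $\bigcap_{i\in I}C_i$ is trivial (because $\mathcal{F}_I\subset L$ forbids transverse extension), while for each $J\subset I$ with $\card J\le g(j)$ the hypothesis $\dim\mathcal{F}_J\ge j>k$ forces $\mathcal{F}_J$ to protrude beyond $L$ and so to produce a non-zero vector in $\bigcap_{i\in J}C_i$. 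A Helly-type theorem for closed convex cones in $\Real^{n-k}$ --- namely, that if the pointwise intersection of a family of closed convex cones is $\{0\}$ then some subfamily of size at most $2(n-k)\le 2(n-j+1)=g(j)$ also has trivial intersection --- would then deliver the required contradiction and establish $\dim\mathcal{F}_I\ge j$.

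The main obstacle is twofold. First, the transverse-cone construction must be robust against tangential pathologies of the sort exhibited by two disks meeting only at a single point of mutual tangency, where the naive intersection of tangent cones is strictly larger than the tangent cone of the set intersection; resolving this likely requires using supporting-hyperplane or exposed-face data at a sufficiently generic $p$ rather than the naive tangent cone, so that the $C_i$ faithfully track codimensional protrusion and not merely local containment. Second, and most substantively, one must prove the asserted Helly bound $2(n-k)$ for closed convex cones; this is a spherical-Helly statement for spherically convex subsets of $S^{n-k-1}$ (which may wrap past a hemisphere), and the doubling from the linear bound $n-k+1$ is precisely the reason for the factor of two in $g(j)=2(n-j+1)$. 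Both of these steps are specifically convex-geometric and are where the argument would fail to transfer to merely definable sets.
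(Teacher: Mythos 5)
First, a point of order: the paper does not prove this theorem at all --- it is quoted from Katchalski and Gr\"unbaum in a remark purely for comparison with Theorem \ref{the:monotone_helly} --- so there is no in-paper proof to measure you against; your attempt has to stand on its own. The two boundary cases of your argument are fine: $j=0$ is Theorem \ref{the:classical_helly} verbatim, and $j=n$ follows by applying it to the family of interiors, using that $\mathrm{int}(A\cap B)=\mathrm{int}(A)\cap\mathrm{int}(B)$ and that an $n$-dimensional convex set in $\Real^n$ has non-empty interior. You have also correctly located the source of the factor $2$ in $g(j)$: a Steinitz-type phenomenon for convex cones, whose sharpness is shown by the four half-planes $\{x_1\ge 0\},\{x_1\le 0\},\{x_2\ge 0\},\{x_2\le 0\}$ in $\Real^2$ (any three share a ray, all four share only the origin). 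But for $0<j<n$ what you have written is a plan whose decisive step is missing, and you say so yourself: the cones $C_i$ are never constructed. The natural candidate --- the closed cone generated by $\pi(\mathbf{F}_i-p)$ --- does satisfy your requirement that $\dim\mathcal{F}_J\ge j>k$ produce a non-zero common vector (a point of $\mathcal{F}_J\setminus L$ projects to one), but it fails the requirement $\bigcap_{i\in I}C_i=\{0\}$: for two tangent disks the two cones are opposite closed half-planes meeting in a line even though $\mathcal{F}_I$ is a single point. Any correct construction must break exactly this configuration, and ``use supporting-hyperplane data at a sufficiently generic $p$'' is not an argument --- in the tangency example $\mathcal{F}_I$ is a point and there is no genericity available. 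This construction is the heart of Katchalski's proof, and it is absent here.

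There is a second, independent gap that you did not flag: the arithmetic does not close. Writing $k=\dim\mathcal{F}_I<j$, your cone-Helly lemma produces a bad subfamily of size $2(n-k)$, and to contradict the hypothesis you need $2(n-k)\le g(j)$. But $2(n-k)\ge 2(n-j+1)$, with equality only when $k=j-1$; so the argument as written only works when the dimension drops by exactly one, and moreover when $j>(n+1)/2$ one has $g(j)=n+1<2(n-k)$ even for $k=j-1$ unless $k\ge(n-1)/2$. Handling $k\le j-2$ requires an additional induction (on $j$ or on $n-k$) which you have not set up, and which is delicate because $g$ is non-increasing in $j$, so the hypothesis for $j$ does not subsume the hypothesis for $j-1$. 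Finally, the cone-Helly bound itself ($\bigcap_i C_i=\{0\}$ in $\Real^m$ forces a trivial subintersection of size $\le 2m$) is true but non-trivial; it follows from Steinitz's theorem via duality ($\bigcap_i C_i=\{0\}$ iff $\sum_i C_i^{*}=\Real^m$ iff $0$ lies in the interior of the convex hull of finitely many unit vectors drawn from the $C_i^{*}$, at most $2m$ of which suffice), and asserting it is not proving it. In short: right phenomena identified, but the proof is not there.
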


Notice that in the special case of definable convex sets in $\Real^n$ that are open subsets
of flats, Theorem \ref{the:monotone_helly} gives a slight improvement over
Theorem \ref{the:grunbaum} in that $n+1 \leq g(j)$ for all $j,\> 0 \leq j \leq n$,
where $g(j)$ is the function defined in Theorem \ref{the:grunbaum}.
The reason  behind this improvement is that convex sets that are graphs of monotone maps (i.e.,
definable open convex subsets of affine flats) are rather special and
easier to deal with,  since we do not need to control the intersections of their boundaries.

Also note that, while it follows immediately from Theorem \ref{the:grunbaum}
(using the same notation) that
\[
\dim \; \mathcal{F}_I = \min(\dim \; \mathcal{F}_J | J \subset I, \card \; J \leq 2n),
\]
Katchalski \cite{Katchalski78}  proved the stronger statement that 
\[
\dim \; \mathcal{F}_I = \min(\dim \; \mathcal{F}_J| J \subset I, \card \; J \leq n+1).
\]
In the case of graphs of monotone maps, the analogue of the latter  
statement is an immediate consequence of Theorem \ref{the:monotone_helly}.   
\end{remark}

\section{Proof of Theorem \ref{the:monotone_helly}}
We begin with a few preliminary definitions.

\begin{definition}
\label{def:semi-monotone}
Let $L_{j, \sigma, c}:= \{ \x=(x_1, \ldots ,x_n) \in \Real^n|\> x_j \sigma c \}$
for $j=1, \ldots ,n$, $\sigma \in \{ <,=,> \}$, and $c \in \Real$.
Each intersection of the kind
$$C:=L_{j_1, \sigma_1, c_1} \cap \cdots \cap L_{j_m, \sigma_m, c_m} \subset \Real^n,$$
where $m=0, \ldots ,n$, $1 \le j_1 < \cdots < j_m \le n$, $\sigma_1, \ldots ,\sigma_m \in \{<,=,> \}$,
and $c_1, \ldots ,c_m \in \Real$, is called a {\em coordinate cone} in $\Real^n$.

Each intersection of the kind
$$S:=L_{j_1, =, c_1} \cap \cdots \cap L_{j_m, =, c_m} \subset \Real^n,$$
where $m=0, \ldots ,n$, $1 \le j_1 < \cdots < j_m \le n$,
and $c_1, \ldots ,c_m \in \Real$, is called an {\em affine coordinate subspace} in $\Real^n$.

In particular, the space $\Real^n$ itself is both a coordinate cone and an affine coordinate
subspace in $\Real^n$.
\end{definition}

\begin{definition}[\cite{BGV}]\label{def:set}
An open (possibly, empty) bounded set $X \subset \Real^n$ is called {\em semi-monotone} if
for each coordinate cone $C$  the intersection $X \cap C$ is connected.
\end{definition}

\begin{remark}
In fact, in Definition \ref{def:set} above, it suffices to consider
intersections with only affine
coordinate subspaces (see 
Definition~\ref{def:def_monotone_map} 
below). 
\end{remark}

We refer the reader to \cite[Figure 1]{BGV} for some examples of
semi-monotone subsets of $\Real^2$, as well as some counter-examples.
In particular, it is clear from the examples that the intersection of two semi-monotone sets
in plane is not necessarily connected and hence not semi-monotone.

Notice that any 
bounded
convex open subset of $\Real^n$ is semi-monotone.

\hide{
The definition of \emph{monotone maps} is given in \cite{BGV2012} and is a bit more technical.
We will not repeat it here but recall a few important properties of monotone maps that we will need.
In particular, Theorem~\ref{th:def_monotone_map} below, which appears in \cite{BGV2012},
gives a complete characterization of monotone maps.
For the purposes
of the present paper this characterization can be taken as the definition of monotone maps.
}

We now define \emph{monotone maps}.
The definition below is not the
one given in \cite{BGV2012}, but equivalent to it as shown in
\cite[Theorem~9]{BGV2012}.

We first need a preliminary definition.
\begin{definition}\label{def:quasi-affine}
Let a bounded continuous map $\f=(f_1, \ldots ,f_k)$ defined on an open bounded non-empty set
$X \subset \Real^n$ have the graph ${\bf F} \subset \Real^{n+k}$.
We say that $\f$ is {\em quasi-affine} if for any coordinate subspace
$T$ of $\Real^{n+k}$, the projection $\rho_T:\> {\bf F} \to T$ is injective if and only if the image
$\rho_T({\bf F})$ is $n$-dimensional.
\end{definition}

\begin{definition}
\label{def:def_monotone_map}
Let a bounded continuous quasi-affine map $\f=(f_1, \ldots ,f_k)$ defined on an open bounded
non-empty set $X \subset \Real^n$ have the graph ${\bf F} \subset \Real^{n+k}$.
We say that the map $\f$ is monotone if 
for each affine coordinate subspace $S$ in $\Real^{n+k}$ the intersection 
${\bf F} \cap S$ is connected.
\end{definition}

\begin{remark}
Notice that it follows immediately from Definition \ref{def:set} that the domain of
definition $X \subset \Real^n$ in Definition \ref{def:def_monotone_map} above is a
semi-monotone set. In particular, also notice that the graphs of monotone maps that appear
in the statement of  Theorem \ref{the:monotone_helly}
refer to graphs of maps defined on open bounded domains, which are semi-monotone subsets of
affine coordinate subspaces of $\Real^n$.
\end{remark}

The following two statements were proved in \cite{BGV2012}.

\begin{theorem}\cite[Corollary~7]{BGV2012}
\label{th:def_monotone_map}
Let $\f:\> X \to \Real^k$ be a monotone map having the graph ${\bf F} \subset \Real^{n+k}$.
Then for every coordinate $z$ in $\Real^{n+k}$ and every $c \in \Real$,
each of the intersections ${\bf F} \cap \{ z\> \sigma\> c \}$, where $\sigma \in \{ <,>,= \}$,
is either empty or the graph of a monotone map.
\end{theorem}

\begin{theorem}\cite[Theorem~10]{BGV2012}
\label{th:proj}
Let $\f:\> X \to \Real^k$ be a monotone map defined on a semi-monotone set $X \subset \Real^n$ and
having the graph ${\bf F} \subset \Real^{n+k}$.
Then for any coordinate subspace $T$ in $\Real^{n+k}$
the image $\rho_T ({\bf F})$ under the projection map $\rho_T:\> {\bf F} \to T$
is either a semi-monotone set or the graph of a monotone map.
\end{theorem}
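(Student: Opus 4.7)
\textbf{Plan for Theorem~\ref{th:proj}.}
Let $Y = \rho_T(\mathbf{F})$ and $d := \dim Y$. My plan is to split the argument into two cases based on whether $d = \dim T$ (in which case $Y$ will be shown to be semi-monotone) or $d < \dim T$ (in which case $Y$ will be shown to be the graph of a monotone map). In the first case, $Y$ is open in $T$ (and bounded since $\mathbf{F}$ is). To verify semi-monotonicity from Definition~\ref{def:set}, I take any coordinate cone $C$ of $T$ and lift it to a coordinate cone $\tilde C \subset \Real^{n+k}$ by imposing no constraint on the coordinates of $\Real^{n+k}$ forgotten by $\rho_T$. Then $Y \cap C = \rho_T(\mathbf{F} \cap \tilde C)$, and $\mathbf{F} \cap \tilde C$ is connected by Theorem~\ref{th:def_monotone_map}(iii); the continuous image of a connected set is connected.

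In the second case $d < \dim T$, I exhibit $Y$ as the graph of a monotone map over a semi-monotone base. I select a subset of coordinates $D \subset K$ of cardinality $d$ such that $\dim \rho_D(\mathbf{F}) = d$ (which exists since $Y \subset \Real^K$ has dimension $d$), and set $X' := \rho_D(\mathbf{F})$. Applying the previous case to the projection onto $\Real^D$, the set $X'$ is open and semi-monotone in $\Real^D$. The candidate map $\mathbf{g} \colon X' \to \Real^{K \setminus D}$ sends $u$ to the tuple $(z_e)_{e \in K \setminus D}$ of values taken on the fiber $F_u := \mathbf{F} \cap \{\rho_D = u\}$. Once $\mathbf{g}$ is shown to be well-defined, its monotonicity follows from Theorem~\ref{th:def_monotone_map}: quasi-affineness of $\mathbf{g}$ descends from that of $\f$ (every coordinate projection of $Y$ factors through the corresponding coordinate projection of $\mathbf{F}$), and for any affine coordinate subspace $S \subset T$, the intersection $Y \cap S$ is the continuous image of $\mathbf{F} \cap \tilde S$ (where $\tilde S$ lifts $S$), which is connected by Theorem~\ref{th:def_monotone_map}(ii).

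The main obstacle is showing $\mathbf{g}$ is well-defined, i.e., each coordinate $z_e$ for $e \in K \setminus D$ is constant on every fiber $F_u$. Since $F_u$ is connected by Theorem~\ref{th:def_monotone_map}(ii), $z_e(F_u)$ is an interval $I_{u,e}$. A dimension count gives $\dim \rho_{D \cup \{e\}}(\mathbf{F}) = d$: it is bounded above by $\dim Y = d$ via the factorization through $\rho_T$, and below by $\dim \rho_D(\mathbf{F}) = d$ via further projection to $\Real^D$. Hence the ``bad'' set $B_e := \{u \in X' : |I_{u,e}| > 0\}$ satisfies $\dim B_e < d$, since otherwise the piece $\{(u,c) : u \in B_e,\ c \in I_{u,e}\}$ of dimension $\dim B_e + 1$ would inflate $\rho_{D \cup \{e\}}(\mathbf{F})$ beyond $d$. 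To conclude $B_e = \emptyset$, I would show that $\rho_D|_{\mathbf{F}} \colon \mathbf{F} \to X'$ is an open map --- a consequence of the continuity of $\f$, the openness of $X$, and the quasi-affine structure of $\f$. Openness forces $B_e$ to be open whenever it is nonempty: given $u_0 \in B_e$ witnessed by points $z^1, z^2 \in F_{u_0}$ with $z^1_e \neq z^2_e$, small disjoint neighborhoods of $z^1$ and $z^2$ in $\mathbf{F}$ on which $z_e$ stays in disjoint intervals project to neighborhoods of $u_0$ in $X'$ whose common points correspond to fibers still containing $z_e$-separated pairs; this forces $\dim B_e = \dim X' = d$, contradicting $\dim B_e < d$. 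Thus $B_e = \emptyset$ and $\mathbf{g}$ is well-defined.
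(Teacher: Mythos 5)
This statement is quoted from \cite{BGV2012} (Theorem~4.6) and is not proved in the present paper, so there is no in-paper proof to compare against; I can only assess your argument on its own terms. The overall architecture---full-dimensional image gives a semi-monotone set, otherwise realize the image as a graph over a full-dimensional coordinate projection $\rho_D(\mathbf{F})$---is the right shape, and the connectivity steps via Theorem~\ref{th:def_monotone_map}(ii),(iii) (intersecting with a cone $C\subset T$ equals projecting $\mathbf{F}\cap\tilde C$ for the lifted cone $\tilde C$) are sound. The dimension count showing $\dim B_e<d$ is also correct. But the proof has genuine gaps at exactly the points where the monotone structure has to do the work, and at each of these points you assert rather than prove.

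First, you claim without argument that $Y=\rho_T(\mathbf F)$ is open in $T$ when $\dim Y=\dim T$. This is false for a merely continuous map on an open set: the image of $(0,1)^2$ under $(x_1,x_2)\mapsto\bigl(x_1,-(x_2-\tfrac12)^2\bigr)$ is $(0,1)\times(-\tfrac14,0]$, full-dimensional and bounded but not open. So openness must be extracted from monotonicity/quasi-affineness, and you never do so; this is arguably the heart of the theorem. The same issue recurs when you assert that $\rho_D|_{\mathbf F}$ is an open map ``a consequence of the continuity of $\f$, the openness of $X$, and the quasi-affine structure of $\f$''---no such consequence is derived, and both the well-definedness and the (unmentioned but necessary) continuity of $\mathbf g$ hang on it. Second, quasi-affineness of $\mathbf g$ does not ``descend'' from that of $\f$: Definition~\ref{def:quasi-affine} for $\f$ ties injectivity of $\rho_{T'}|_{\mathbf F}$ to the image being $n$-dimensional, whereas for $\mathbf g$ you need injectivity of $\rho_{T'}|_{Y}$ tied to the image being $d$-dimensional. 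When $d<n$ the former biconditional is vacuous (no coordinate projection of $Y$ can be $n$-dimensional, so $\rho_{T'}|_{\mathbf F}$ is never injective) and yields no information about the direction you need, namely that a $d$-dimensional image forces $\rho_{T'}|_Y$ to be injective. Each of these three points requires a substantive argument (in \cite{BGV2012} this is done by induction together with the structural results on monotone maps), so the proposal as written does not constitute a proof.
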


\begin{remark}
\label{rem:quasi-affine}
In view of 
Definition~\ref{def:def_monotone_map}, 
it is natural to identify any semi-monotone set
$X \subset \Real^n$ with the graph of 
the constant function $f:X\rightarrow \Real^0 = {\mathbf{0}}$.
Also, note that in this case the function $f$ is trivially quasi-affine (cf. Definition \ref{def:quasi-affine}).
\end{remark}

We need two preliminary lemmas before we prove Theorem
\ref{the:monotone_helly}.

\begin{lemma}
\label{lem:intermediate}
Suppose that
$\mathcal{F} = (\mathbf{F}_i)_{i \in I}$
is a family of definable subsets of $\Real^n$ such
that for each $i \in I$ the set $\mathbf{F}_i$ is the graph of a monotone map.
Then, there exists a family of definable sets,
$\mathcal{F}' = (\mathbf{F}_i')_{i \in I}$ such that:
\begin{enumerate}
\item
for each $i\in I$ the set $\mathbf{F}_i'$ is 
compact;
\item
for each $J \subset I$ we have
$$
\displaylines{
\HH_*(\mathcal{F}'_J,\mathbb{Z})
\cong
\HH_*(\mathcal{F}_J,\mathbb{Z}),
}
$$
where $\HH_*(X,\mathbb{Z})$ denotes the singular homology of $X$.
\end{enumerate}
\end{lemma}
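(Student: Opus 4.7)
The plan is to set each $\mathbf{F}_i'$ equal to a compact inward shrinkage of $\mathbf{F}_i$, choosing the shrinkage parameter small enough that every intersection $\mathcal{F}_J$ retains its homotopy type. First observe that each $\mathbf{F}_i$ is already bounded, since by Definition~\ref{def:quasi-affine} a monotone map has bounded domain and bounded range; only closedness needs to be arranged.

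For each $\epsilon>0$ set
$$\mathbf{F}_i^{(\epsilon)}:=\bigl\{\x\in\overline{\mathbf{F}_i}:\, d\bigl(\x,\overline{\mathbf{F}_i}\setminus \mathbf{F}_i\bigr)\ge \epsilon\bigr\},$$
where $d$ is Euclidean distance in $\Real^n$. This set is definable, bounded, closed (as the intersection of two closed sets), contained in $\mathbf{F}_i$, and non-empty for $\epsilon$ small (since $\mathbf{F}_i$ is open in its affine span). For every $J\subset I$, the set
$$\mathcal{F}_J^{(\epsilon)}:=\bigcap_{j\in J}\mathbf{F}_j^{(\epsilon)}\subset \mathcal{F}_J$$
depends definably on $\epsilon$. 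By Hardt-style trivialization for o-minimal structures, this family has only finitely many topological types, and the trivialization yields, for each $J$, a threshold $\epsilon_J>0$ below which the inclusion $\mathcal{F}_J^{(\epsilon)}\hookrightarrow \mathcal{F}_J$ is a definable strong deformation retract. Since $I$ has finitely many subsets, set $\epsilon_0:=\min_J \epsilon_J>0$ and define $\mathbf{F}_i':=\mathbf{F}_i^{(\epsilon_0)}$.

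To verify part~(2), note that $\mathcal{F}_J=\bigcup_{\epsilon>0}\mathcal{F}_J^{(\epsilon)}$ is an increasing union as $\epsilon\to 0^+$, so continuity of singular homology under increasing unions gives
$$\HH_*(\mathcal{F}_J,\mathbb{Z})=\varinjlim_{\epsilon\to 0^+}\HH_*\bigl(\mathcal{F}_J^{(\epsilon)},\mathbb{Z}\bigr).$$
By the previous paragraph, every transition map in this direct system is an isomorphism once $\epsilon\le \epsilon_0$, so the colimit equals $\HH_*(\mathcal{F}_J^{(\epsilon_0)},\mathbb{Z})=\HH_*(\mathcal{F}_J',\mathbb{Z})$.

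The main obstacle I anticipate is bundling, into a single $\epsilon_0$, the deformation retractions $\mathcal{F}_J\simeq\mathcal{F}_J^{(\epsilon_0)}$ for all subsets $J\subset I$ simultaneously: one must apply Hardt trivialization to the combined family $\{(\mathcal{F}_J^{(\epsilon)})_{J\subset I}\}_{\epsilon>0}$, stratifying $(0,\infty)$ compatibly for every $J$, and take a common refinement. A secondary subtlety is that, when $\mathbf{F}_i$ has positive codimension in $\Real^n$, the distance $d$ should be interpreted within the affine span of $\mathbf{F}_i$ to guarantee non-emptiness of $\mathbf{F}_i^{(\epsilon)}$; this is a harmless change since the affine spans are themselves closed in $\Real^n$.
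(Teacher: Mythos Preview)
Your argument is correct and follows the same architecture as the paper's proof: build a one-parameter compact exhaustion $\mathbf{F}_i^{(\epsilon)}\nearrow \mathbf{F}_i$, observe that for each $J$ the sets $\mathcal{F}_J^{(\epsilon)}$ are cofinal among compact subsets of $\mathcal{F}_J$ so that $\HH_*(\mathcal{F}_J)\cong\varinjlim \HH_*(\mathcal{F}_J^{(\epsilon)})$, and then invoke Hardt triviality to freeze the limit at a single $\epsilon_0$ working for all $J$ simultaneously.

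The only substantive difference is the choice of exhaustion. The paper uses the regular-cell theorem (\cite[Theorem~5.1]{BGV2012}) to pick a definable homeomorphism $\phi_i:(0,1)^{\dim \mathbf{F}_i}\to\mathbf{F}_i$ and sets $\mathbf{F}_i^{(\epsilon)}=\phi_i([\epsilon,1-\epsilon]^{\dim \mathbf{F}_i})$; you instead use the sublevel sets of the distance to the frontier $\overline{\mathbf{F}_i}\setminus\mathbf{F}_i$. Your route is a bit more elementary in that it does not call on the regular-cell theorem, while the paper's route makes each $\mathbf{F}_i^{(\epsilon)}$ manifestly a closed ball. Two small cosmetic points: your aside that $\mathbf{F}_i$ is ``open in its affine span'' is false in general (graphs of non-affine maps are not flat), though non-emptiness of $\mathbf{F}_i^{(\epsilon)}$ follows simply because every point of $\mathbf{F}_i$ has positive distance to the closed set $\overline{\mathbf{F}_i}\setminus\mathbf{F}_i$; and your appeal to ``continuity of singular homology under increasing unions'' should be phrased, as the paper does, via cofinality of the $\mathcal{F}_J^{(\epsilon)}$ among compact subsets, which is what actually makes the direct-limit identification valid.
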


\begin{proof}
Since, according to 
\cite[Theorem~13]{BGV2012} 
the graph of a monotone map is a regular cell,
we have for each $i \in I$ a definable
homeomorphism
$$\phi_i: (0,1)^{\dim(\mathbf{F}_i)} \rightarrow \mathbf{F}_i.$$

For each real $\eps > 0$ small enough, and for each $i \in I$ consider the image
$$\mathbf{F}_{i}^{(\eps)} = \phi_i([\eps,1-\eps]^{\dim(\mathbf{F}_i)}).$$
Consider the family $\mathcal{F}^{(\eps)} = \left( \mathbf{F}_{i}^{(\eps)} \right)_{i \in I}$.
Observe for each $J \subset I$ and each $\eps >0$ the intersection
$\mathcal{F}^{(\eps)}_{J}$ is compact, and the increasing family
$\left(\mathcal{F}^{(\eps)}_{J} \right)_{\eps> 0}$
is co-final in the directed system (under the inclusion maps) of the compact subsets of $\mathcal{F}_J$.
Since, the singular homology group of any space is isomorphic to the direct limit of the singular
homology groups of its compact subsets \cite[Sec. 4, Theorem 6]{Spanier}, we have
$$
\displaylines{
\varinjlim \HH_*(\mathcal{F}^{(\eps)}_{J}, \mathbb{Z})
\cong
\HH_*(\mathcal{F}_{J},\mathbb{Z}).
}
$$
Finally, by Hardt's triviality theorem \cite{Michel2} there exists
$\eps_0 > 0$, such  that
$$
\displaylines{
\varinjlim \HH_*(\mathcal{F}^{(\eps)}_{J}, \mathbb{Z})
\cong
\HH_*(\mathcal{F}^{(\eps_0)}_{J},\mathbb{Z}).
}
$$
For each $i \in I$, we let $\mathbf{F}_i' = \mathbf{F}_{i}^{(\eps_0)}$.
\end{proof}

\begin{lemma}
\label{lem:intermediate2}
Let $\mathcal{F} = (\mathbf{F}_i)_{i \in I}$ be a family of definable subsets of $\Real^n$ such
that for each $i \in I$ the set $\mathbf{F}_i$ is the graph of a monotone map, and
for each $J \subset I$, with $\card\; J \leq n+1$ the intersection $\mathcal{F}_J$
is non-empty and the graph of a monotone map.
Suppose that $\dim \;\mathcal{F}_I = p$, where 
$0 \leq p \leq n$.
Then, there exists a subset $J \subset I$ with $\card \; J \leq n-p$ such
that $\dim \;\mathcal{F}_J = p$. 
(Note that if $J= \emptyset$ then ${\mathcal F}_J=\Real^n$ by convention, and  
$\dim {\mathcal F}_J=n$.)
\end{lemma}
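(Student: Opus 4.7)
The plan is to construct $J$ by a greedy process. Starting with $J_0 = \emptyset$ (so $\mathcal{F}_{J_0} = \Real^n$ has dimension $n$), I would show that at every stage $k$ with $\dim \mathcal{F}_{J_k} > p$, there exists an index $i_{k+1} \in I \setminus J_k$ such that $\dim \mathcal{F}_{J_k \cup \{i_{k+1}\}} < \dim \mathcal{F}_{J_k}$. Because the sequence of dimensions is strictly decreasing and is bounded below by $\dim \mathcal{F}_I = p$, the procedure terminates in at most $n-p$ steps, producing the desired $J$. Throughout the construction $\card J_k \leq n$, so every relevant intersection has cardinality at most $n+1$ and is therefore the graph of a monotone map by hypothesis.

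The central task is to verify the key claim: if $\card J \leq n$ and $\dim \mathcal{F}_J = d > p$, then some such $i$ exists. I would argue by contradiction, assuming $\dim \mathcal{F}_{J \cup \{i\}} = d$ for every $i \in I \setminus J$. For $J \neq \emptyset$ (the case $J = \emptyset$ being handled directly), $\mathcal{F}_J$ is a graph of a monotone map, so by the quasi-affine property of Definition \ref{def:quasi-affine} together with Theorem \ref{th:def_monotone_map}, the coordinate projection $\pi_A$ onto the input-coordinate subspace $\Real^A$ of this graph (with $|A| = d$) is a homeomorphism from $\mathcal{F}_J$ onto a semi-monotone set $X_J \subset \Real^A$. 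Since $\mathcal{F}_{J \cup \{i\}} \subseteq \mathcal{F}_J$, the restriction $\pi_A|_{\mathcal{F}_{J \cup \{i\}}}$ is injective, and its image $Y_i \subset X_J$ has dimension $d$. Theorem \ref{th:proj} then tells us that each $Y_i$ is either semi-monotone or the graph of a monotone map; the latter option is ruled out inside $\Real^A$ by a dimension count, since $\dim Y_i = |A|$ forces the output coordinates of any such monotone-map representation to be empty. Hence each $Y_i$ is open in $\Real^A$.

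Transporting $\mathcal{F}_I = \bigcap_{i \in I \setminus J}(\mathcal{F}_J \cap \mathbf{F}_i)$ across the homeomorphism $\pi_A|_{\mathcal{F}_J}$ yields $\pi_A(\mathcal{F}_I) = \bigcap_{i \in I \setminus J} Y_i$, a finite intersection of open subsets of $\Real^A$, which is open, and which is non-empty since $\dim \mathcal{F}_I = p \geq 0$. This forces $\dim \mathcal{F}_I = |A| = d$, contradicting $\dim \mathcal{F}_I = p < d$. The edge case $J = \emptyset$ uses the same idea without any projection: if every $\mathbf{F}_i$ had dimension $n$, each would be an open subset of $\Real^n$, making $\mathcal{F}_I$ a non-empty open set and hence $n$-dimensional, again a contradiction. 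I expect the main obstacle in executing this plan to be the verification that each $Y_i$ is open in $\Real^A$, which critically depends on combining Theorem \ref{th:proj} with the dimension-based exclusion of the non-trivial ``graph of monotone map'' alternative.
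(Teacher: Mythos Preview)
Your argument is correct and takes a genuinely different route from the paper's. The paper proceeds by downward induction on $p$: one first locates some (possibly large) $J'$ together with a single index $i$ for which $\dim \mathcal{F}_{J'} > p$ but $\dim \mathcal{F}_{J' \cup \{i\}} = p$; the inductive hypothesis then replaces $J'$ by some $J'' \subset J'$ of cardinality $< n-p$ with $\dim \mathcal{F}_{J''} = \dim \mathcal{F}_{J'}$, and the passage from $J' \cup \{i\}$ to $J = J'' \cup \{i\}$ is justified by a \emph{local} argument exploiting only that the graph of a monotone map is a regular cell and therefore has the same local dimension at every point. Your greedy construction instead invokes Theorem~\ref{th:proj} at the key step: if no further index drops the dimension, each $\mathcal{F}_{J\cup\{i\}}$ projects under $\pi_A$ onto a full-dimensional, hence open, subset of $\Real^A$, and the finite intersection of these open sets is open and non-empty, forcing $\dim \mathcal{F}_I = d$. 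Both proofs are ``add one index at a time,'' but the technical cores differ: the paper uses only the elementary equidimensionality of regular cells, whereas you lean on the stronger structural projection theorem. Your version is more direct (no auxiliary induction) and makes the obstruction to stalling completely transparent; the paper's version has the virtue of needing less machinery. One small remark: the appeal to Definition~\ref{def:quasi-affine} and Theorem~\ref{th:def_monotone_map} for the homeomorphism $\pi_A\vert_{\mathcal{F}_J}$ is more than necessary---this is simply the projection of a graph of a continuous map onto its domain.
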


\begin{proof}
The proof is by induction on $p$.
If $p=n$, then the lemma trivially holds.

Suppose that the claim holds for all dimensions strictly larger than $p$.
Then, there exist a subset $J' \subset I$ (possibly empty), with
$\dim \; \mathcal{F}_{J'} > p$ (noting that if $J' = \emptyset$, then $\dim \; \mathcal{F}_{J'} = n$),
and  $i \in I$ such that $\dim \; \mathcal{F}_{J' \cup\{i\}} = p$.
By the induction hypothesis there exists a subset $J'' \subset J'$, with $\card\; J'' < n-p$,
such that
$\dim \;
\mathcal{F}_{J''}
= \dim \;\mathcal{F}_{J'}
$.

Since
$\displaystyle{
\mathcal{F}_{J'} \subset
\mathcal{F}_{J''}
}
$,
there exists an open definable subset $U \subset \Real^n$ such that
\begin{enumerate}
\item
$\displaystyle{
U \cap \mathcal{F}_{J'} = U \cap \mathcal{F}_{J''}
}
$ and
\item
$\dim \; (U \cap \mathcal{F}_{J'} \cap \mathbf{F}_i) = p$.
\end{enumerate}
Then 
taking $J = J'' \cup \{i\}$, 
$\card \; J \leq n-p$, and, since $n-p \leq n+1$,
the intersection $\mathcal{F}_J$ is the graph of a monotone map by the conditions of the 
lemma.
This proves the lemma because $\mathcal{F}_J$, being a regular cell,
has the same local dimension at each point.
\end{proof}

\begin{proof}[Proof of Theorem \ref{the:monotone_helly}]
The proof is by a double induction on $n$ and $s$.
For $n=1$ the theorem is true for all $s$, since it is just Helly's
theorem in dimension $1$.

Now assume that the statement is true in dimension $n-1$ for all $s$.
In dimension $n$, for $s \leq n+1$, there is nothing to prove.
Assume that the theorem is true in dimension $n$ for at least $s-1$ sets.

The proof is in several steps.
\begin{enumerate}
\item
We first prove that
$\displaystyle{
\mathcal{F}_I
}
$
is non-empty. The proof of this fact is adapted from the classical proof of the topological
version of Helly's theorem (also due to Helly \cite{Helly}).

According to Lemma \ref{lem:intermediate} there exists a family
$\mathcal{F}' = (\mathbf{F}_i')_{i \in I}$
consisting of 
compact
definable 
sets, such that for each $J \subset I$ we have
$$
\displaylines{
\HH_*(\mathcal{F}_J',\mathbb{Z})
\cong
\HH_*(\mathcal{F}_J,\mathbb{Z}).
}
$$
Thus, it suffices to prove that $\mathcal{F}_I'$ is non-empty.
Suppose that $\mathcal{F}_I'$ is empty.
Then, there exists a smallest sub-family, $(\mathbf{F}_j')_{j \in [p]}$,
for some $p$ with $n+2 \leq p \leq s$, such that $\mathcal{F}_{[p]}'$ is empty,
and for each proper subset $J \subset [p]$ the intersection $\mathcal{F}_{J}'$ is non-empty.

Using the induction hypothesis on $s$, applied to the family $(\mathbf{F}_j)_{j \in J}$, 
for each $J \subset I$ with 
$\card \; J < \card \; I = s$, we conclude that
$\mathcal{F}_J$ is the graph of a monotone map and hence acyclic.
But then the set $\mathcal{F}'_J$ is also acyclic since it has the same
singular homology groups as $\mathcal{F}_J$.
Consider the nerve simplicial complex of the family $(\mathbf{F}_j')_{j \in [p]}$.
It has the homology of the $(p-2)$-dimensional sphere $\mathbf{S}^{p-2}$
being isomorphic to the simplicial complex of the boundary of a $(p-1)$-dimensional simplex.
Therefore, the union
$\displaystyle{
\bigcup_{i \in [p]} \mathbf{F}_i'
}
$
also has the homology of $\mathbf{S}^{p-2}$, which is impossible since $p-2 \geq n$.
Thus, $\mathcal{F}_I'$ is non-empty, and hence $\mathcal{F}_I$ is non-empty as well.

\item
We next prove that $\mathcal{F}_I$ is connected.
If not, let $\mathcal{F}_I= B_1 \cup B_2$, where the sets $B_1,B_2$ are non-empty, disjoint
and closed in $\mathcal{F}_I$.

For any $c \in \Real$ the intersection $\mathcal{F}_I \cap \{x_1 =c\}$, where $x_1$
is a coordinate in $\Real^n$, is either empty or connected,
by Theorem~\ref{th:def_monotone_map} and the induction hypothesis for dimension $n-1$.
Hence, $B_1$ and $B_2$ must lie on the opposite sides of a hyperplane
$\{ x_1=c \}$ 
for some $c\in \Real$, with
$$B_1 \cap \{x_1=c\} = B_2 \cap \{x_1=c\} = \emptyset.$$
Now, for every $J \subset I$, such that $\card \; J \leq n$, the intersection
$\mathcal{F}_J$ is the graph of a monotone
map by the conditions of the theorem, and contains both $B_1$ and $B_2$.
Hence $\mathcal{F}_J$ meets the hyperplane $\{ x_1 = c \}$, and,
by Theorem~\ref{th:def_monotone_map}, the intersection
$\mathcal{F}_J \cap \{x_1 =c\}$ is a graph of a monotone map.
Applying the induction hypothesis in dimension $n-1$, to the family
$(\mathbf{F}_i \cap \{x_1 =c\})_{i \in I}$
we obtain that $\mathcal{F}_I \cap \{x_1=c\}$ is non-empty,  which is a contradiction.


\item
We next prove that 
$\mathcal{F}_I$ is the graph of a quasi-affine map.


Let $\dim\; \mathcal{F}_I=p$.
If $p 
=n$,
then $\mathcal{F}_I$ is an non-empty, open, bounded, definable set and
is automatically the graph of a quasi-affine map (cf. Remark \ref{rem:quasi-affine}). 
So we can assume that $p < n$.

By Lemma \ref{lem:intermediate2},  there exists $J \subset I$ with $\card \; J \leq n-p$ such
that $\dim \;\mathcal{F}_J = p$.
By the assumption of the theorem, $\mathcal{F}_J$ is the graph of a monotone map, in particular,
that map is quasi-affine.
Since $p <n$, there exists $i \in J$ such that $m:= \dim\; \mathbf{F}_i < n$.
Assume $\mathbf{F}_i$ to be the graph of a monotone map defined on the semi-monotone subset of
the coordinate subspace $T$.
Then $\dim \; T = m < n$.

Let $\rho_T:\> \Real^n \to T$ be the projection map.
Consider the family
\[
\mathcal{F}'' := (\rho_T(\mathbf{F}_j \cap \mathbf{F}_i))_{j \in I}
.
\]

Every intersection of at most $m+1$ members of $\mathcal{F}''$ is the image under $\rho_T$ of the
intersection of at most $m+2 \leq n+1$ members of $\mathcal{F}$.
By the assumption of the theorem, each intersection of at most $m+2 \leq n+1$ members of $\mathcal{F}$
is the non-empty graph of a monotone map.
Then, by Theorem~\ref{th:proj}, every intersection of at most $m+1$ elements of
$\mathcal{F}''$ is non-empty and is either the graph of a monotone map or a semi-monotone set.
The case when all intersections are semi-monotone sets is trivial, so assume that some of them
are graphs of a monotone maps.
Applying the induction hypothesis (with respect to $n$) to the family $\mathcal{F}''$
we obtain that the intersection,
$
\mathcal{F}''_I
$
is a graph of a monotone map defined on some semi-monotone subset
$ U \subset L$ where $L$ is a coordinate subspace of $T$, and hence
$
\mathcal{F}_I
$
is the graph of a definable
map defined on
$U$.
This, together with the fact that
$\mathcal{F}_I$ is contained in the graph $\mathcal{F}_J$ of a quasi-affine map,
having the same dimension, implies that $\mathcal{F}_I$ is also the graph of a quasi-affine map.

\item
We now prove that 
$\mathcal{F}_I $ is the graph of a monotone map. We have already shown that 
$\mathcal{F}_I $ is non-empty and connected.
Also, the intersection of $\mathcal{F}_I$ with every affine coordinate subspace is connected
by applying again the induction hypothesis.
It now follows from
Definition~\ref{def:def_monotone_map} that
$
\mathcal{F}_I
$ is the graph of a monotone map.

\item
Finally, we prove  that if $\dim \; \mathcal{F}_J \geq d$
for each $J \subset I$, with $\card\; J \leq n+1$, then $\dim \; \mathcal{F}_I \geq d$.
If $d=n$, then $\mathcal{F}_I$ is non-empty and open, and
hence $\dim \; \mathcal{F}_I = d$. So we can assume that $d < n$.

Since $d < n$, there exists $i \in I$, such that $m:= \dim \mathbf{F}_i  < n$.
Let $T \subset \Real^n$ be a coordinate
subspace such that $\mathbf{F}_i$ is a graph of a map over a non-empty semi-monotone
subset of $T$, and let $\dim \;T = m$.
Consider the family $(\rho_T(\mathbf{F}_j \cap \mathbf{F}_i))_{j \in I}$.
By assumption of the theorem and 
Theorem~\ref{th:proj}
we have that for every subset $J \subset I$, with $\card \; J \leq n$,
the family $(\rho_T(\mathbf{F}_j \cap \mathbf{F}_i))_{j \in I}$ consists
of graphs of monotone maps, and every finite intersection of at most
$m+1 \leq n$ 
of these sets is non-empty and also the graph of monotone map having
dimension at least $d$.
Using the induction hypothesis with respect to $n$, we conclude that
$$\dim \; \bigcap_{j \in I} \rho_T(\mathbf{F}_j \cap \mathbf{F}_i) \geq d.$$
It follows that $\dim\; \mathcal{F}_I \geq d$.
\end{enumerate}
\end{proof}

\begin{remark}
\label{rem:final_remark}
It is possible to generalize Theorem \ref{the:monotone_helly} 
slightly by requiring only that all
members of the family $\mathcal{F}$ be contained in the graph of some fixed 
monotone map of dimension $n$ (rather than in $\Real^n$ as in Theorem
\ref{the:monotone_helly}). However, since this would unduly 
complicate the statement of the theorem  we preferred not to make this
slight extension.
\end{remark}

\bibliographystyle{plain}
\bibliography{master}
\end{document}